\newtheorem{theorem}{Theorem}[section]
\newtheorem{lemma}[theorem]{Lemma}
\newtheorem {proposition}[theorem]{Proposition}
\theoremstyle{definition}
\newtheorem{definition}[theorem]{Definition}
\newtheorem{definition and notation}[theorem]{Definition and Notation}
\theoremstyle{remark}
\newtheorem{remark}[theorem]{Remark}
\newtheorem{remarks}[theorem]{Remarks}
\numberwithin {equation}{section}
\def\ra{{\rightarrow}}
\def\ece{{E^{\times}/E^{\times p}}}
\def\ep{{E^{\times p}}}
\def\t{{\times}}
\def\s{{\sigma}}
\def\p{{$p$}}
\def\minac{{Min\'{a}\v{c}}}
\def\fpg{{$\mathbb{F}_p[G]$}}
\def\zp{{\mathbb{Z}/p\mathbb{Z}}}
\def\fpgg{{\mathbb{F}_p[G]}}
\def\fpg{{$\mathbb{F}_p[G]$}}
\def\o{{\oplus}}
\def\fp{{\mathbb{F}_p}}
\def\sb{{\widetilde{\sigma}}}
\def\ecp{{E^{\t p}}}
\def\eb{{\overline{E}}}
\begin{document}
\title{A relative version of Kummer theory}

\author{Vahid Shirbisheh}
\address{Department of Mathematics, Tarbiat Modares University, Tehran, Iran}
\email{shirbisheh@modares.ac.ir}

\date{\today}
\keywords{Galois theory, Kummer theory, Module theory over group
algebras}

\begin{abstract}
Let $E/F$ be a cyclic Galois extension of degree $p^l$ with Galois
group $G$. It is shown that  the Galois module structure of both
sides of the Kummer pairing (for Kummer extensions of $E$) are the
same. In other words, we show that the Kummer duality holds in the
level of finitely generated $G$-modules.
\end{abstract}
\maketitle

\section{Introduction}
\label{sec:intro}

Let $E/F$ be a cyclic Galois extension of degree $p^l$ for a prime
number $p$ and a positive integer $l$. We denote the Galois group
$Gal(E/F)$ by $G$. We also assume $F$ contains a primitive \p th
root of unity. Let $E^\t$ denote the multiplicative group of $E$.
For a positive integer $n$, we set $E^{\t n}=\{ a^n ; a\in E^\t \}$.
The Galois action of $G$ on the abelian group $\ece$ gives it an
\fpg-module structure. The aim of this paper is to establish an
\fpg-module isomorphism between every finitely generated
\fpg-submodule of $\ece$ and the Galois group of its associated
Kummer extension over $E$ as an \fpg-module. Motivation of this
isomorphism comes from certain type of Galois embedding problems.
Besides explaining all details, the other advantage of this
exposition is that the Galois action of $G$ on the above Galois
group is studied explicitly. For applications of this result, we
refer the reader to \cite{mss} and \cite{s3}.

\section{A relative version of Kummer theory}
\label{sec:relative} We assume the Galois group $G=Gal(E/F)$ is
generated by $\s$. We also use notations and definitions of module
theory over \fpg, developed in \cite{s2}. The subgroup of all $n$th
roots of unity in $E^\t$ is denoted by $\mu_n$. If $B$ be a subset
of $E^\t$, then $E_B:=E(B^{1/n})$ is the field extension of $E$
obtained by adding all $n$th roots of elements of $B$ to $E$. Now,
Kummer theory can be summarized in the following theorem:
\begin{theorem} \label{thm:ktheory} Let $n$ be a positive integer and
let $E$ be a field whose characteristic does not divide $n$ which
contains a primitive $n$th root of unity. Let $B$ be a subgroup of
$E^{\t}$ containing $E^{\t n}$. Then $E_B$ is Galois over $E$ and
the Galois group $N_B=Gal(E_B/E)$ is abelian of exponent $n$.
Moreover, there is a $\mathbb{Z}/n\mathbb{Z}$-bilinear map
\begin{equation}
\label{f:kpairing1} N_B \t B/E^{\t n}\ra \mu_n
\end{equation}
\[
(\tau,[x])\mapsto
\langle\tau,[x]\rangle:=\tau(\sqrt[n]{x})/\sqrt[n]{x}, \quad \tau\in
N_B, x\in B.\]
The extension $E_B/E$ is finite if and only if
$B/E^{\t n}$ is finite, in this case $B/E^{\t n}$ is isomorphic to
$N_B$. Furthermore, the map $B\mapsto E_B$ is a bijection between
the set of subgroups of $E^\t$ containing $E^{\t n}$ and Galois
extensions of $E$ whose Galois groups over $E$ are abelian of
exponent $n$.
\end{theorem}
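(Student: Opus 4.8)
The proof splits into the four assertions, and I would take them in order. \emph{First}, that $E_B/E$ is Galois with $N_B$ abelian of exponent dividing $n$: for each $a \in B$, since the characteristic of $E$ does not divide $n$ the polynomial $X^n - a$ is separable, and as $E$ contains a primitive $n$th root of unity $\zeta$ its roots $\zeta^i\sqrt[n]{a}$ all lie in $E(\sqrt[n]{a})$, so $E(\sqrt[n]{a})/E$ is Galois; the map $\tau \mapsto \tau(\sqrt[n]{a})/\sqrt[n]{a}$ is an injective homomorphism $Gal(E(\sqrt[n]{a})/E) \hookrightarrow \mu_n$ — multiplicative because $\mu_n \subseteq E$ is $\tau$-fixed, injective because an automorphism fixing $\sqrt[n]{a}$ fixes $E(\sqrt[n]{a})$ — so this group is cyclic of order dividing $n$. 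Since $E_B$ is the compositum of the $E(\sqrt[n]{a})$, $a \in B$, it is Galois over $E$, and restriction embeds $N_B$ in the product $\prod_{a \in B} Gal(E(\sqrt[n]{a})/E)$; hence $N_B$ is abelian of exponent dividing $n$.

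\emph{Second}, for the Kummer pairing I would check it is well defined and $\mathbb{Z}/n\mathbb{Z}$-bilinear, then prove nondegeneracy. Well-definedness: replacing the chosen $n$th root of $x$, or replacing $x$ within its class modulo $E^{\t n}$, changes $\sqrt[n]{x}$ only by an element of $E$, which $\tau$ fixes, so $\tau(\sqrt[n]{x})/\sqrt[n]{x}$ is unchanged; and $(\tau(\sqrt[n]{x})/\sqrt[n]{x})^n = \tau(x)/x = 1$, so the value lies in $\mu_n$. Bilinearity is a short computation (additivity in $\tau$ uses $\mu_n \subseteq E$; additivity in $[x]$ uses the choice $\sqrt[n]{xy} = \sqrt[n]{x}\,\sqrt[n]{y}$). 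Nondegeneracy: if $\langle\tau,[x]\rangle = 1$ for all $x \in B$ then $\tau$ fixes the generators $\sqrt[n]{x}$ of $E_B$, so $\tau = 1$; if $\langle\tau,[x]\rangle = 1$ for all $\tau \in N_B$ then $\sqrt[n]{x}$ is fixed by $Gal(E_B/E)$, hence lies in $E$, so $x \in E^{\t n}$. Thus the pairing induces injections $N_B \hookrightarrow \mathrm{Hom}(B/E^{\t n},\mu_n)$ and $B/E^{\t n} \hookrightarrow \mathrm{Hom}(N_B,\mu_n)$.

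\emph{Third}, for finiteness and the isomorphism: if $B/E^{\t n}$ is finite, generated by the classes of $x_1,\dots,x_m$, then $E_B = E(\sqrt[n]{x_1},\dots,\sqrt[n]{x_m})$ is finite over $E$; conversely if $E_B/E$ is finite then $N_B$ is finite, whence $B/E^{\t n}$ is finite by the second injection above. In the finite case $N_B$ and $B/E^{\t n}$ are finite abelian groups killed by $n$, so the two injections give $|N_B| \le |\mathrm{Hom}(B/E^{\t n},\mu_n)| = |B/E^{\t n}| \le |\mathrm{Hom}(N_B,\mu_n)| = |N_B|$, forcing equality throughout; the pairing is therefore perfect and $N_B \cong B/E^{\t n}$. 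This also yields injectivity of $B \mapsto E_B$ through the recovery formula $B = (E_B^{\t})^n \cap E^{\t}$: the inclusion $B \subseteq (E_B^{\t})^n \cap E^{\t}$ is clear, and conversely if $x \in E^{\t}$ has an $n$th root in $E_B$ then $\sqrt[n]{x} \in E_{B_0}$ for some $B_0$ with $E^{\t n} \subseteq B_0 \subseteq B$ and $B_0/E^{\t n}$ finite, so $E_{\langle B_0,x\rangle} = E_{B_0}(\sqrt[n]{x}) = E_{B_0}$, and comparing orders (the finite case applied to $B_0 \subseteq \langle B_0,x\rangle$) gives $x \in B_0 \subseteq B$; hence $E_{B_1} = E_{B_2}$ implies $B_1 = B_2$.

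\emph{Fourth}, for surjectivity of $B \mapsto E_B$: given $L/E$ Galois with $Gal(L/E)$ abelian of exponent dividing $n$, put $B = (L^{\t})^n \cap E^{\t}$, so $E_B \subseteq L$ trivially. For $L \subseteq E_B$ I would reduce to the cyclic case — writing $Gal(L/E)$ as a product of cyclic groups displays $L$ as a compositum of cyclic subextensions $L_i/E$ of degrees $d_i \mid n$ — and for each $L_i$ invoke Hilbert's Theorem 90 (the Lagrange resolvent, which requires a primitive $d_i$th root of unity, available in $\mu_n \subseteq E$) to produce $\alpha_i \in L_i$ with $L_i = E(\alpha_i)$ and $a_i := \alpha_i^{d_i} \in E^{\t}$; then $\alpha_i^{n} = a_i^{n/d_i} \in (L^{\t})^n \cap E^{\t} = B$, so $\alpha_i \in E_B$ and $L_i \subseteq E_B$, hence $L \subseteq E_B$. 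The infinite case of all parts follows by writing each subgroup as the directed union of its finitely generated subgroups (and each extension as the compositum of its finite subextensions) and passing to the limit. I expect the genuine obstacles to be the order equality $[B:E^{\t n}] = [E_B:E]$ in the finite case and the Hilbert 90 step in the surjectivity argument; these are precisely the points where the hypothesis $\mu_n \subseteq E$ is used in an essential way, everything else being formal Galois theory.
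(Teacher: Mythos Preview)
Your proof is correct and follows the standard textbook route to classical Kummer theory: separability of $X^n-a$ plus $\mu_n\subseteq E$ gives Galois with abelian group of exponent $n$; the Kummer pairing is checked to be well defined and nondegenerate; a counting argument via the dual injections yields the isomorphism in the finite case and the recovery formula $B=(E_B^\times)^n\cap E^\times$; and Hilbert~90 handles surjectivity of $B\mapsto E_B$.

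However, there is nothing to compare against: the paper does not prove this theorem. It is stated as a summary of classical Kummer theory and used as background for the paper's actual contribution, which is the \emph{relative} version (Theorem~\ref{thm:relativekt}) about the $\mathbb{F}_p[G]$-module structure of the two sides of the pairing when $E/F$ is cyclic of degree $p^l$. So your write-up supplies a proof the paper deliberately omits; it is sound, but the paper's ``approach'' is simply to quote the result.
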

A field extension of $E$ of the form $E_B$ described in the above
theorem is called a {\it Kummer extension of $E$ of exponent $n$}.
The pairing \ref{f:kpairing1} is called {\it the Kummer pairing}.
From now on, we let $n$ be a prime number \p. Then, the Kummer
pairing is a non-degenerate $\fp$-bilinear map. The normal closure
of a Kummer extension $E_B$ of $E$ over $F$ is denoted by $\eb_B$.
It is the Kummer extension associated with $\overline{B}$, the
\fpg-submodule of $\ece$ generated by elements of $B$, see \cite{w}.
To describe the action of $G=Gal(E/F)$ on $Gal(\eb_B/E)$, or
equivalently the \fpg-module structure of $Gal(\eb_B/E)$, the
following remark is useful.
\begin{remark}
\label{r:3actions} There are three Galois actions of $G$ on abelian
groups occurring in the Kummer pairing \ref{f:kpairing1}. Since F
contains a primitive \p th root of unity, the action of $G$ on
$\mu_p$ is trivial. It is clear that $E^{\t p}$ is invariant under
the Galois action of $G$. Therefore, if $B$ is a subgroup of $E^\t$
containing $E^{\t p}$ and invariant under the action of $G$, then
$B/E^{\t p}$ is endowed with an action of $G$ defined by
$\s([a]):=[\s(a)]$ for $a\in B$. It also follows from Theorem 1 of
\cite{w} that $E_B=\eb_B$, and consequently $E_B$ is Galois over
$F$. In this case, let $N_B$ (resp. $H_B$) be the Galois group of
$E_B$ over $E$ (resp. $F$), as it is illustrated in the following
diagram:
\begin{equation}
\xymatrix{ E_B \ar@{-}[d] \ar@/^2pc/@{-}[dd]^{H_B=Gal(E_B/F)} \ar@/_2pc/@{-}[d]_{N_B=Gal(E_B/E)}\\
E \ar@/_2pc/@{-}[d]_{G=Gal(E/F)} \ar@{-}[d]\\
F }
\end{equation}
Regarding the extension $1\ra N_B \ra H_B \ra G \ra 1$, let $\sb$ be
a lift of $\s$ in $H_B$. The action of $G$ on $H_B$ is defined by
conjugation, i.e. $\s(h):=\sb h \sb^{-1}$ for all $h\in H_B$. Since
$N_B$ is normal in $H_B$, this action induces an action of $G$ on
$N_B$.  Let $\s_0$ be another lift of $\s$ in $H_B$. Then $\s_0=\sb
n_0$ for some $n_0\in N_B$ and due to the fact that $N_B$ is
abelian, for $n\in N_B$, we have $ \s_0 n \s_0^{-1}=\sb n_0 n
n_0^{-1} \sb^{-1}=\sb n\sb^{-1}$. This shows that the above action
of $G$ on $N_B$ does not depend on the choice of the lift of $\s$.
Clearly, it does not depend on the choice of the generator of $G$
neither.
\end{remark}

\begin{proposition} (\cite{w}) The Kummer pairing preserves the Galois module structure of the involving abelian groups. More precisely, for
all $b\in B$ and $\tau\in N_B$, we have
\begin{equation}
\label{f:kpres} \langle\s(\tau), \s([b])\rangle=\s(\langle\tau,
[b]\rangle)= \langle\tau, [b]\rangle.
\end{equation}
\end{proposition}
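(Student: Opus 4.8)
The plan is to verify \eqref{f:kpres} by a direct computation from the formula $\langle\tau,[x]\rangle=\tau(\sqrt[p]{x})/\sqrt[p]{x}$ defining the Kummer pairing, after fixing a single lift $\sb\in H_B$ of $\s$ and using it to describe the $G$-action on $N_B$ as in Remark \ref{r:3actions}. Two preliminary facts will be used. First, by Theorem \ref{thm:ktheory} the value $\langle\tau,[x]\rangle$ does not depend on the choice of $p$th root of $x$: two such roots differ by a factor in $\mu_p\subseteq E$, which $\tau\in N_B$ fixes. Second, $\sb$ restricts to $\s$ on $E$ and to the identity on $F$; since $F$ contains a primitive \p th root of unity we have $\mu_p\subseteq F$, so $\sb$ fixes $\mu_p$ pointwise, while the $G$-action on $\mu_p$ is trivial.

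First I would fix $b\in B$, $\tau\in N_B$, and a root $\beta$ of $x^p-b$ in $E_B$, so that $\beta^p=b$. Applying $\sb$ gives $\sb(\beta)^p=\sb(b)=\s(b)$, so $\sb(\beta)$ is a valid choice of $\sqrt[p]{\s(b)}$; by the first preliminary fact I may use it to evaluate $\langle\s(\tau),\s([b])\rangle$. Then, writing $\s(\tau)=\sb\tau\sb^{-1}$,
\begin{align*}
\langle\s(\tau),\s([b])\rangle
&=\frac{(\sb\tau\sb^{-1})\big(\sb(\beta)\big)}{\sb(\beta)}
=\frac{\sb\big(\tau(\beta)\big)}{\sb(\beta)}\\
&=\sb\!\left(\frac{\tau(\beta)}{\beta}\right)
=\sb\big(\langle\tau,[b]\rangle\big)
=\langle\tau,[b]\rangle,
\end{align*}
where the last equality holds because $\langle\tau,[b]\rangle\in\mu_p\subseteq F$ is fixed by $\sb$. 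Since the $G$-action on $\mu_p$ is trivial we also have $\langle\tau,[b]\rangle=\s(\langle\tau,[b]\rangle)$, so all three terms of \eqref{f:kpres} agree.

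The computation is short, so the only point I would treat with care is that every symbol appearing is well defined. The class $\s([b])$ is legitimate in $B/\ecp$ because $B$ is $G$-invariant and contains $\ecp$, as assumed in Remark \ref{r:3actions}. The value $\langle\s(\tau),\s([b])\rangle$ is independent of the representative chosen for $\sqrt[p]{\s(b)}$ (used above) and of the chosen lift $\sb$ of $\s$, the latter being exactly the observation recorded at the end of Remark \ref{r:3actions}. There is no genuine obstacle here: the proposition is precisely the statement that the three Galois actions listed in that remark are compatible with the defining formula of the pairing, and the displayed equalities make this compatibility explicit.
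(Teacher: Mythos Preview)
Your proof is correct and follows essentially the same route as the paper: both fix a lift $\sb$ of $\s$, observe that $\sb(\beta)$ serves as a $p$th root of $\s(b)$, and then unwind $\langle\s(\tau),\s([b])\rangle=(\sb\tau\sb^{-1})(\sb(\beta))/\sb(\beta)=\sb\big(\tau(\beta)/\beta\big)$ before invoking triviality of the $G$-action on $\mu_p$. If anything, your version is slightly cleaner in making explicit why the choice of $p$th root is immaterial.
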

\begin{proof} We keep using notations and assumptions of Remark \ref{r:3actions}.
Let $b\in B$ and let $y=\sqrt[p]{b}$ be a \p th root of $b$ in
$E_B$. Then, we have $ \langle\s(\tau), \s([b])\rangle =
\langle\s(\tau), [\s(b)]\rangle= \frac{\sb \tau \sb^{-1}
\sqrt[p]{\s(b)}}{\sqrt[p]{\s(b)}}= \frac{\sb \tau \sb^{-1}
\sqrt[p]{\s(y^p)}}{\sqrt[p]{\s(y^p)}}= \frac{\sb \tau \sb^{-1}
\sqrt[p]{(\sb(y))^p}}{\sqrt[p]{(\sb(y))^p}}= \frac{\sb \tau \sb^{-1}
\sb(y)}{\sb(y)}= \frac{\sb \tau (y)}{\sb(y)} = \sb \left(\frac{\tau
(\sqrt[p]{b})}{(\sqrt[p]{b})} \right)= \sb (\langle\tau,
[b]\rangle)= \s(\langle\tau, [b]\rangle)=\langle\tau, [b]\rangle$.
The last equality follows from the fact that the action of $G$ on
$\mu_p$ is trivial.
\end{proof}
Our next goal is to show that if $B/\ep$ is a finitely generated
\fpg-submodule of $\ece$, then the \fpg-module structures of $B/\ep$
and $N_B$ are the same. Here, our approach is based on the structure
of $N_B$ as a Galois group with a $G$-action. Let $a\in E^\t$. In
the following remarks, we make some observations about the group
structure and the \fpg-module structure of
$Gal(\overline{E(a^{1/p})}/E)$.
\begin{remarks}
\label{r:gaction}
Let $s$ be the smallest natural number such that
$x^s a$ is a \p th power in $E$.
\begin{itemize}
\item [(i)] It is clear from Theorem 1 of \cite{w} that $\overline{E(a^{1/p})}$
is a splitting field of the following family of polynomials in
$E[y]$
\begin{equation*}
\{   P_0(y)=y^p-a, P_1(y)=y^p-xa,\cdots, P_{s-1}(y)=y^p-x^{s-1}a\},
\end{equation*}
where $x=\s-1$ as in \cite{s2}. Let $\zeta$ be a primitive \p th
root of unity, and let $\alpha_0$ be a root of $P_0(y)$. Then, for
$i=1,\cdots,s-1$, we define $\alpha_i:=x^i \alpha_0$. Now, it is
easy to see that for $i=0,\cdots,s-1$, the roots of $P_i(y)$ are
$\alpha_i,\zeta\alpha_i,\cdots,\zeta^{p-1}\alpha_i$. Furthermore, we
note that an arbitrary element $(n_0,\cdots,n_{s-1})$ in
$(\zp)^s\simeq Gal(\overline{E(a^{1/p})}/E)$ permutes these roots by
the formula
$(n_0,\cdots,n_{s-1})(\zeta^j\alpha_i)=\zeta^{j+n_i}\alpha_i$.
\item [(ii)] Since $x^sa$ is a \p th power in $E$, we have $(x\alpha_{s-1})^p=x(\alpha_{s-1})^p=x(x^{s-1}a)=x^sa=b^p$
for some $b\in E$. Thus, $x\alpha_{s-1}$ is a \p th root of $b^p$.
On the other hand $b$ is a \p th root of $b^p$ too. So,
$x\alpha_{s-1}=\zeta^k b$ for an integer $k$ and consequently
$x\alpha_{s-1}\in E$. Therefore, $x^j\alpha_i \in E$, if $i+j \geq
s$. On the other hand, by our definition, we have
$x^j\alpha_i=\alpha_{i+j}$, whenever $i+j<s$.
\end{itemize}
\end{remarks}
We know from \cite{s2} that \fpg, as a ring, is isomorphic to
$\fp[x]/x^q$, where $q=p^l$.
\begin{lemma}
\label{l:rho} With the above notation, let $\rho:A\ra
Gal(\overline{E(a^{1/p})}/E)$ as an \fpg-homomorphism be defined by
$\rho(1)=(0,\cdots,0,1)$. Then, the kernel of $\rho$ is $\langle
x^s\rangle$.
\end{lemma}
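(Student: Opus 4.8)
The plan is to translate the statement into the explicit \fpg-module structure of $N:=Gal(\overline{E(a^{1/p})}/E)$ recorded in Remarks \ref{r:gaction}, and then use the fact, recalled just above, that $\fpgg\cong\fp[x]/x^q$ is a local ring whose only ideals are the totally ordered family $\langle x^k\rangle$ for $0\le k\le q$. Since $A=\fpgg$ is free of rank one as an \fpg-module (so that $\rho$ is determined by $\rho(1)$), the submodule $\ker\rho$ is an ideal of $\fpgg$, hence $\ker\rho=\langle x^k\rangle$ for a unique $k$; it therefore suffices to prove $\rho(x^s)=0$ while $\rho(x^{s-1})\neq 0$, which forces $k=s$.

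Both assertions concern $\tau_0:=\rho(1)=(0,\dots,0,1)\in N$, since $\rho(x^j)=x^j\cdot\tau_0$, so the heart of the matter is to compute the $x$-action (equivalently the $\s$-action) on $N$. I would extract this from the three ingredients already in hand: the identification of $\tau\in N$ with $(n_0,\dots,n_{s-1})\in(\zp)^s$ via $\tau(\alpha_i)=\zeta^{n_i}\alpha_i$ from Remarks \ref{r:gaction}(i); the conjugation formula $\s(\tau)=\sb\tau\sb^{-1}$ from Remark \ref{r:3actions}, which depends neither on the chosen lift $\sb$ of $\s$ nor on the chosen root $\alpha_0$; and the action of $\sb$ on the roots, namely $\sb(\alpha_i)=\alpha_i\,(x\alpha_i)=\alpha_i\alpha_{i+1}$ for $0\le i\le s-2$ (because $\alpha_{i+1}=x\alpha_i$ and $x$ acts multiplicatively as $y\mapsto\sb(y)/y$), together with $\sb(\alpha_{s-1})=\alpha_{s-1}\,(x\alpha_{s-1})$ where $x\alpha_{s-1}\in E$ by Remarks \ref{r:gaction}(ii), and $\sb(\zeta)=\zeta$ since $\zeta\in F$. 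Evaluating $\s(\tau)=\sb\tau\sb^{-1}$ on $\sb(\alpha_i)$ and using $\s(\tau)(\sb(\alpha_i))=\sb(\tau(\alpha_i))$, a direct computation gives, for $\s(\tau)=(m_0,\dots,m_{s-1})$, the relations $m_{s-1}=n_{s-1}$ and $m_i+m_{i+1}=n_i$ for $0\le i\le s-2$. Equivalently, in the standard basis $e_0,\dots,e_{s-1}$ of $N$ (with $e_{s-1}=\tau_0$) the automorphism $\s$ is unipotent and lowers indices: $xe_0=0$, and for $i\ge1$ the element $xe_i$ lies in $\langle e_0,\dots,e_{i-1}\rangle$ with coefficient $-1$ on $e_{i-1}$.

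Given this, a short induction on $j$ shows that $\rho(x^j)=x^je_{s-1}$ has a unit coefficient on $e_{s-1-j}$ and vanishing coefficients on $e_{s-j},\dots,e_{s-1}$ for $0\le j\le s-1$; in particular $\rho(x^{s-1})$ is a nonzero multiple of $e_0$, and hence $\rho(x^s)=x\cdot\rho(x^{s-1})=0$ because $xe_0=0$. Combined with the first paragraph this gives $\ker\rho=\langle x^s\rangle$. As a byproduct $\tau_0$ generates $N$ over \fpg, so $\rho$ is surjective and induces an isomorphism $A/\langle x^s\rangle\cong N$, which will presumably be used afterwards.

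The step I expect to be the real obstacle is the middle one: getting the action of $\sb$ on the $\alpha_i$ exactly right and propagating it correctly, since $\sb$ is a field automorphism and therefore acts nonlinearly on the roots. The delicate point is that the recursion ``closes off'' precisely at $i=s-1$ because $x\alpha_{s-1}\in E$ --- this is where the minimality of $s$ enters, and it is exactly what makes $x^s\tau_0=0$ rather than $x^k\tau_0=0$ for some larger $k$ (or never). Once the module structure of $N$ is correct, the remaining arguments --- the ideal structure of $\fp[x]/x^q$ and the induction tracking leading coefficients --- are routine.
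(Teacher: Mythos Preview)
Your proposal is correct and follows essentially the same route as the paper: both arguments use the ideal structure of $\fpgg\cong\fp[x]/x^q$ to reduce to showing $\rho(x^{s-1})\neq 0$, and both establish this by an induction tracking the leading nonzero coordinate of $\rho(x^j)$ under the conjugation action, obtaining (in your notation) that the $e_{i-1}$-coefficient of $x\cdot(\text{something with leading term at }e_i)$ is $-n_i\neq 0$. The only cosmetic difference is that the paper bounds $m\le s$ by cardinality whereas you verify $\rho(x^s)=0$ directly via $xe_0=0$; your derivation of the full recursion $m_{s-1}=n_{s-1}$, $m_i+m_{i+1}=n_i$ is a bit more systematic than the paper's ad hoc computation of $\rho(x^{n+1})(\alpha_{i-1})$, but the content is the same.
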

\begin{proof}
By Proposition 1.2 of \cite{s2}, the kernel of $\rho$ is of the form
$\langle x^m\rangle$ for some $m\leq q$. If $m>s$, then we obtain an
injective map from $A/\langle x^m\rangle$ into
$Gal(\overline{E(a^{1/p})}/E)\simeq (\zp)^s$, which is impossible
because it implies that $p^m$, the cardinality of $A/\langle
x^m\rangle$, is less than or equal $p^s$, the cardinality of
$(\zp)^s$. Now, by showing that $\rho(x^{s-1})\neq id$, we prove
that $m$ cannot be less than $s$. For $0\leq n \leq s-2$, let
$\rho(x^{n})=(n_0,\cdots,n_{s-1})$ and let
$\rho(x^{n+1})=(m_0,\cdots,m_{s-1})$.\\
{\bf Claim:} Let $n_i$ be the last non-zero component of
$\rho(x^{n})$. If $i>0$, then $m_{i-1}\neq 0$.\\
To see this, we have to compute $\rho(x^{n+1})(\alpha_{i-1})$. With
the notations of Remark \ref{r:3actions}, we have $
\rho(x^{n+1})\rho(x^n)=\rho(x^{n+1}+x^{n}) =
\rho((x+1)(x^{n}))=(x+1)\rho(x^{n})=\sb(\rho(x^{n})))= \sb
 \rho(x^{n})\sb^{-1}=(x+1)\rho(x^{n})(x+1)^{-1} $. Thus, we have
 \begin{equation}
\rho(x^{n+1})=(x+1)\rho(x^{n})(x+1)^{-1} \rho(x^{n})^{-1}.
 \end{equation}
Hence, $\rho(x^{n+1})(\alpha_{i-1})= (x+1)\rho(x^{n})(x+1)^{-1}
\rho(x^{n})^{-1}(\alpha_{i-1})=(x+1)\rho(x^{n})(x+1)^{-1}(\zeta^{-n_{i-1}}\alpha_{i-1})=
\zeta^{-n_{i-1}}(x+1)\rho(x^{n})(1-x+x^2-\cdots)(\alpha_{i-1})=
\zeta^{-n_{i-1}}(x+1)\rho(x^{n})\left(\frac{\alpha_{i-1}(x^2\alpha_{i-1})\cdots}{(x\alpha_{i-1})(x^3\alpha_{i-1})\cdots}\right)
=\zeta^{-n_{i-1}}(x+1)\rho(x^{n})\left(\frac{\alpha_{i-1}(x^2\alpha_{i-1})\cdots}{\alpha_{i}(x^3\alpha_{i-1})\cdots}\right)=\star$.
We note that for $r>1$, $x^r\alpha_{i-1}$ is either $\alpha_{i+r-1}$
or an element of $E$, see Remarks \ref{r:gaction}. In both cases,
$\rho(x^n)$ does not change it. Thus, we have $ \star=
\zeta^{-n_{i-1}}(x+1)\left(\frac{\zeta^{n_{i-1}}\alpha_{i-1}(x^2\alpha_{i-1})\cdots}{\zeta^{n_{i}}\alpha_{i}(x^3\alpha_{i-1})\cdots}\right)
=\zeta^{-n_{i}}(x+1)\left(\frac{\alpha_{i-1}(x^2\alpha_{i-1})\cdots}{\alpha_{i}(x^3\alpha_{i-1})\cdots}\right)
=\zeta^{-n_{i}}(x+1)\left(\frac{\alpha_{i-1}(x^2\alpha_{i-1})\cdots}{(x\alpha_{i-1})(x^3\alpha_{i-1})\cdots}\right)
=\zeta^{-n_{i}}(x+1)((x+1)^{-1}(\alpha_{i-1}))=\zeta^{-n_{i}}\alpha_{i-1}$.
Therefore, $m_{i-1}=-n_{i}\neq 0$.\\
Now, since the component with index $s-1$ in $\rho(1)$ is non-zero,
the component with index $s-2$ in $\rho(x)$ is non-zero and one can
argue by induction that $\rho(x^{k})$ has a non-zero component for
$0\leq k< s$. Thus, $\rho(x^k)\neq id$ for $0\leq k < s$.
\end{proof}
We call the following theorem the {\it relative Kummer theory}.
\begin{theorem}
\label{thm:relativekt} Let $E/F$ be a cyclic extension of degree
$q=p^l$ with Galois group $G$ and let $F$ contain a primitive \p th
root of unity. Let $B$ be a subgroup of $E^\t$ containing $E^{\t p}$
and invariant under the Galois action of $G$. If $B/E^{\t p}$ is a
finitely generated \fpg-module, it has the same \fpg-module
structure as $N_B=Gal(E_B/E)$.
\end{theorem}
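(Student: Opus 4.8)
The plan is to reduce the statement to the cyclic case, which is essentially Lemma \ref{l:rho}, and then to glue the cyclic pieces together field‑theoretically. Put $A:=\fpg$; by \cite{s2} we have $A\cong\fp[x]/x^q$, a principal ideal ring whose ideals form the chain $A=\langle x^0\rangle\supset\langle x\rangle\supset\cdots\supset\langle x^q\rangle=0$ (Proposition 1.2 of \cite{s2}). Consequently every finitely generated $A$-module is a finite direct sum of cyclic modules $A/\langle x^m\rangle$ with $0\le m\le q$. Writing $M:=B/\ep$, which is finite since it is finitely generated over the finite ring $A$, fix such a decomposition $M=\bigoplus_{i=1}^{k}A[a_i]$ with $a_i\in B$, where $A[a_i]\cong A/\langle x^{s_i}\rangle$. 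By Proposition 1.2 of \cite{s2} the annihilator of $[a_i]$ is $\langle x^{s_i}\rangle$, so $s_i$ is precisely the least natural number with $x^{s_i}a_i\in\ep$, matching the integer $s$ of Remarks \ref{r:gaction} applied to $a=a_i$. Finally, since $M$ is finite, $E_B/E$ is finite by Theorem \ref{thm:ktheory}.

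Next I treat each cyclic summand. Let $B_i\supseteq\ep$ be the preimage in $\et$ of $A[a_i]\subseteq\ece$; it is a $G$-invariant subgroup with $B_i/\ep=A[a_i]$, and by Theorem 1 of \cite{w} one has $E_{B_i}=\overline{E(a_i^{1/p})}$, the normal closure of $E(a_i^{1/p})$ over $F$, which is Galois over $F$. Lemma \ref{l:rho} provides an $A$-homomorphism $\rho_i\colon A\ra N_{B_i}:=Gal(E_{B_i}/E)$ with $\rho_i(1)=(0,\dots,0,1)$ and kernel $\langle x^{s_i}\rangle$; since $|A/\langle x^{s_i}\rangle|=p^{s_i}=|(\zp)^{s_i}|=|N_{B_i}|$ by Remarks \ref{r:gaction}, the map $\rho_i$ is onto and descends to an \fpg-isomorphism $A/\langle x^{s_i}\rangle\xrightarrow{\ \sim\ }N_{B_i}$. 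Hence $N_{B_i}\cong A[a_i]$ as \fpg-modules.

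It remains to identify $N_B$ with $\bigoplus_{i}N_{B_i}$ as an \fpg-module. From $B_1\cdots B_k/\ep=\sum_i A[a_i]=M=B/\ep$ we get $B_1\cdots B_k=B$, so $E_B$ is the compositum of the $E_{B_i}$; and $[E_B:E]=|M|=\prod_i p^{s_i}=\prod_i[E_{B_i}:E]$ by Theorem \ref{thm:ktheory}, so the $E_{B_i}$ are linearly disjoint over $E$. Therefore the restriction map $N_B\ra\prod_{i=1}^{k}N_{B_i}$, $\tau\mapsto(\tau|_{E_{B_1}},\dots,\tau|_{E_{B_k}})$, is injective, and bijective by the above count. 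Moreover each $E_{B_i}$ is stable under any lift $\sb$ of $\s$ in $Gal(E_B/F)$ and $\sb|_{E_{B_i}}$ is a lift of $\s$ in $Gal(E_{B_i}/F)$, so this restriction map intertwines the conjugation $G$-actions of Remark \ref{r:3actions}; thus it is an \fpg-isomorphism. Combining the three steps, $N_B\cong\bigoplus_{i=1}^{k}N_{B_i}\cong\bigoplus_{i=1}^{k}A[a_i]=M=B/\ep$ as \fpg-modules. I expect the main obstacle to be this last gluing step: one must verify that the abstract direct-sum decomposition of $M$ is realised by a linearly disjoint compositum of Kummer subextensions of $E_B$ over which the conjugation action of $G$ decomposes accordingly, the cyclic case itself being handled by Lemma \ref{l:rho} together with a cardinality count.
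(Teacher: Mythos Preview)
Your argument is correct and rests on the same two ingredients as the paper's first proof: the structure theorem for finitely generated $\fpgg$-modules together with Lemma \ref{l:rho} for the cyclic pieces, followed by a field-theoretic splitting of $N_B$ compatible with the $G$-action. The only real difference is organisational. The paper peels off one cyclic summand at a time by induction, working with the filtration $E\subset E_{B[1]}\subset E_B$ and verifying separately that $N[1]=Gal(E_B/E_{B[1]})$ and (the subgroup identified with) $N_{B[1]}$ are $G$-stable, the former via Proposition 2.3 and the latter by an explicit computation with the basis $(x^jb_1)^{1/p}$. You instead treat all summands at once through the restriction isomorphism $N_B\xrightarrow{\ \sim\ }\prod_i N_{B_i}$, deducing its $G$-equivariance from the single observation that each $E_{B_i}$ is Galois over $F$ and hence stable under any lift $\sb$; this is slightly cleaner and avoids the ad hoc calculations, while the paper's version has the virtue of tracking the $G$-module structure of $N_B$ more explicitly. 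Either way, the linear disjointness over $E$ (equivalently, the degree identity $[E_B:E]=\prod_i[E_{B_i}:E]$) is exactly what makes the gluing go through, and you identified this correctly as the crux.
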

\begin{proof} We prove the statement by induction on the number of summands in the
decomposition of $B/E^{\t p}$ into direct sum of cyclic modules. In
the case $B/E^{\t p}=0$ there is nothing to prove. So, let
$B/\ecp\simeq B_1/\ecp\o \cdots \o B_n/\ecp$ be a decomposition of
$B/\ecp$ into direct sum of cyclic modules, and let the statement of
the theorem be true for \fpg-submodules of $\ece$ whose
decompositions have less summands than $n$.
\par
For $i=1,\cdots,n$, let $\dim_{\fp}B_i/\ecp=l_i$. We choose $b_i\in
B_i$ such that $[b_i]$ is a generator of $B_i/\ecp$ as an
\fpg-module. Consider $$B[1]=B_2/\ecp\o \cdots \o B_n/\ecp $$ as a
submodule of $B/\ecp$, and let $N[1]=Gal(E_B/E_{B[1]})$, as
illustrated in the following diagram:
\begin{equation}
\xymatrix{E_B \ar@{-}[d] \ar@/_2pc/@{-}[dd]_{N_B} \ar@/^2pc/@{-}[d]^{N[1]}\\
E_{B[1]}\ar@{-}[d] \ar@/^2pc/@{-}[d]^{N_{B[1]}}\\
E}
\end{equation}
To prove that $N_B$ and $B/\ecp$ have the same \fpg-module
structure, we first show that $N[1]\simeq B_1/\ecp$. Let
\[
C=\{x^{j_i}b_i; i=1,\cdots,n \; \text{and}\; 0\leq j_i \leq l_i-1
\}.
\]
By Theorem 1 of \cite{w}, $E_B=E(C^{1/p})$. Therefore,
\[
E_B=E_{B[1]}(b_1^{1/p},(xb_1)^{1/p},\cdots,(x^{l_1-1}b_1)^{1/p}).
\]
Let $a=b_1$ and $s=l_1$. Then, Remarks \ref{r:gaction} applies to
the extension $E_B=E_{B[1]}(a^{1/p})$ over $E_{B[1]}$ and similar to
Lemma \ref{l:rho}, we define an \fpg-homomorphism $\rho:A\ra N[1]$
whose kernel is $\langle x^s\rangle$. This map gives rise to an
injective \fpg-homomorphism, say $\tilde{\rho}$ of $A/\langle
x^s\rangle\simeq B_1/\ecp$ into $N[1]=Gal(E_B/E_{B[1]})=\zp^s$.
Since the source and the target of $\tilde{\rho}$ have the same
cardinality, it is an \fpg-isomorphism. Therefore,
\begin{equation}
N[1]\simeq B_1/E^{\t p}.
\end{equation}
Now, we claim that $N[1]$ is an \fpg-submodule of $N_B$. To see
this, let $\tau\in N[1]$. Since $B[1]$ is an \fpg-submodule of
$B/E^{\t p}$, for every $b\in B[1]$, $\s^{-1}b\in B[1]$. Thus, by
Proposition 2.3, for every $b\in B[1]$ we have
$\langle\s\tau,b\rangle=\langle\s\tau,\s\s^{-1}b\rangle=\langle\tau,\s^{-1}b\rangle=1$.
This implies
that $\s\tau\in N[1]$. \\
On the other hand, by Kummer theory, we have the following
decomposition of $N_B$ into a direct sum of two abelian subgroups:
\begin{equation}
N_B\simeq N[1]\o N_{B[1]}.
\end{equation}
We claim that $N_{B[1]}$ is an \fpg-submodule of $N_B$. To see this,
consider an element of the basis of $E_B$ over $E_{B[1]}$, say
$(x^jb_1)^{1/p}$. Then, for $\tau\in N_{B[1]}$, we have $
\s(\tau)(x^jb_1)^{1/p}= \s\tau\s^{-1} (x^jb_1)^{1/p}= (x+1) \tau
(x+1)^{-1} (x^jb_1)^{1/p} = (x+1) \tau (1-x+x^2-\cdots)
(x^jb_1)^{1/p}= \star$. Now, one notes that every term of
$(1-x+x^2-\cdots) (x^jb_1)^{1/p}$ is again an element of the basis
of $E_B$ over $E_{B[1]}$, so $\tau$ acts trivially on it and we have
$ \star= (x+1)(1-x+x^2-\cdots) (x^jb_1)^{1/p}=(x+1)(x+1)^{-1}
(x^jb_1)^{1/p}=(x^jb_1)^{1/p}$. This shows that $\s\tau\in
N_{B[1]}$. Therefore, 2.7 can be considered as a decomposition of
$N_B$ into a direct sum of two \fpg-submodules. By the induction
hypothesis, $N_{B[1]}\simeq B[1]$. Therefore, using 2.6 and 2.7 we
have $N_B\simeq B_1/E^{\t p}\o B[1]=B/E^{\t p}$ as two \fpg-modules.
\end{proof}

\section{the second proof}
\label{sec:second} In this section, we explain another approach to
prove the relative Kummer theory. This approach has been suggested
to the author by \minac, and was appeared concisely in \cite{mss}.
Here, we first use the fact that the Kummer pairing preserves the
$G$-action, Proposition 2.3. Then, we prove an analogue of Lemma
\ref{l:rho}  to show cyclic \fpg-modules are self-dual. The
advantage of this approach is its simplicity, also it is shorter.
However, one does not see the \fpg-module structure of $N_B$
concretely here.
\par
It is well-known that the Kummer pairing defines a group isomorphism
between $N_B$ and $Hom_{\mathbb{Z}}(B/\ep,\fp)$ as follows:
\begin{eqnarray}
\label{f:psi} \quad \quad \Psi:N_B \ra Hom_{\mathbb{Z}}(B/\ep,\fp)
\end{eqnarray}
\begin{eqnarray*}
\qquad \qquad\quad \tau\mapsto \Psi_{\tau} :&B/\ep&\ra \fp \\
&[b]&\mapsto \langle\tau,[b]\rangle
\end{eqnarray*}
\par
 The following definition describes the \fpg-module structure of
 the abelian group $Hom_{\mathbb{Z}}(B/\ep,\fp)$, the interested reader can find more
 details about this in \cite{b}, page 48.
\begin{definition}
Let a group $G$ act on two abelian groups $H$ and $K$. Then, $G$
acts on $Hom_{\mathbb{Z}}(H,K)$ as follows:
\[
g\theta (h)=g\theta (g^{-1}h),
\]
where $g\in G$, $\theta\in Hom_{\mathbb{Z}}(H,K)$ and $h\in H$.
\end{definition}
One notes that the above \fpg-module structure on
$Hom_{\mathbb{Z}}(H,K)$ is different from its usual module structure
as the Hom functor of two \fpg-modules. We consider only this
\fpg-module structure in this paper.
\begin{remark}
\label{r:psii} First, we note that since $B/\ep$ is of exponent \p,
every homomorphism in $Hom_{\mathbb{Z}}(B/\ep,\fp)$ can be
considered mod \p. So, this Hom is the same as
$Hom_{\fp}(B/\ep,\fp)$, which is the dual of $B/\ep$ as a vector
space over $\fp$.  Now, by considering the above \fpg-module
structure on $Hom_{\fp}(B/\ep,\fp)$, it is easy to see that $\Psi$
as defined in \ref{f:psi} is an \fpg-isomorphism. To see this, let
$\s\in G$ and $\tau\in N_B$, then we have $
[\Psi(\s\tau)]([b])=\langle
\s\tau,[b]\rangle=\langle\tau,\s^{-1}[b]\rangle=\langle
\s\tau,\s\s^{-1}[b]\rangle=\langle\tau,\s^{-1}[b]\rangle=[\Psi(\tau)](\s^{-1}[b])=\s([\Psi(\tau)](\s^{-1}[b]))$.
Since $F$ contains a \p th root of unity, we have
$[\Psi(\s\tau)]([b])=[\s(\Psi(\tau))]([b])$ for every $b\in B$.
\end{remark}
\begin{proposition} Let $M$ be a finitely generated \fpg-module. Then $M$ and $Hom_{\fp}(M,\fp)$ are
isomorphic as \fpg-modules.
\end{proposition}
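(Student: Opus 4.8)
The plan is to reduce at once to the cyclic case and then prove directly that a cyclic $\fpgg$-module is isomorphic to its dual, this last step being the promised analogue of Lemma \ref{l:rho}. Since $\fpgg\simeq\fp[x]/x^q$ is a principal ideal ring, the structure theory of modules over it used in \cite{s2} gives a decomposition $M\simeq\o_{j=1}^{r} A/\langle x^{m_j}\rangle$ with $A=\fpgg$ and each $1\le m_j\le q$. On the other hand, for the $G$-action of Definition 3.1 the functor $Hom_{\fp}(-,\fp)$ is additive, that is $Hom_{\fp}(H_1\o H_2,\fp)\simeq Hom_{\fp}(H_1,\fp)\o Hom_{\fp}(H_2,\fp)$ as $\fpgg$-modules, because the action is defined pointwise and hence respects the splitting of a functional according to the splitting of its domain into submodules. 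So it suffices to treat $M=A/\langle x^m\rangle$ cyclic (the case $M=0$ being trivial).

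Fix such an $M$ and put $M^{\ast}=Hom_{\fp}(M,\fp)$, with $\s\theta(v)=\theta(\s^{-1}v)$ (the action on $\fp$ being trivial). First I would compute the action of $x=\s-1$ on $M^{\ast}$: for $\theta\in M^{\ast}$ and $v\in M$ one has $(x\theta)(v)=\theta((\s^{-1}-1)v)$, and in $A$
\[
\s^{-1}-1=(1+x)^{-1}-1=-x\,(1-x+x^2-\cdots)=x\,u,\qquad u:=-(1+x)^{-1}\in A^{\times}.
\]
Thus $x$ acts on $M^{\ast}$ as the $\fp$-transpose of multiplication by $xu$ on $M$. Since $x^{m}$ kills $M$ while $x^{m-1}$ does not, and $u$ is a unit, the operator $xu$ on $M$ is nilpotent of index exactly $m$; transposition preserves the nilpotency index, so the action of $x$ on $M^{\ast}$ is again nilpotent of index exactly $m$, while $\dim_{\fp}M^{\ast}=\dim_{\fp}M=m$.

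Now I would choose $\theta_0\in M^{\ast}$ with $x^{m-1}\theta_0\neq 0$. Then $\theta_0,x\theta_0,\dots,x^{m-1}\theta_0$ are $\fp$-linearly independent: a nontrivial relation with first nonzero coefficient in degree $j$ would, after multiplication by $x^{m-1-j}$, force $x^{m-1}\theta_0=0$. Hence these $m$ vectors form an $\fp$-basis of $M^{\ast}$, so $M^{\ast}$ is generated over $A$ by $\theta_0$ and $M^{\ast}\simeq A/\mathrm{Ann}(\theta_0)$. Since the ideals of $A$ are exactly the $\langle x^k\rangle$ and $x^{m}\theta_0=0\neq x^{m-1}\theta_0$, we get $\mathrm{Ann}(\theta_0)=\langle x^m\rangle$, whence $M^{\ast}\simeq A/\langle x^m\rangle=M$ as $\fpgg$-modules. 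The only point needing care is the middle step, namely that the nilpotency index is preserved under both multiplication by a unit and $\fp$-transposition; both facts are elementary, but they are precisely what matches the ``length'' of $M^{\ast}$ with that of $M$.
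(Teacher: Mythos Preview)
Your proof is correct and follows essentially the same route as the paper: reduce to the cyclic case by additivity of $Hom_{\fp}(-,\fp)$, observe that the action of $x=\s-1$ on $M^{\ast}$ corresponds to the action of $\s^{-1}-1=xu$ (with $u$ a unit) on $M$, and conclude that $M^{\ast}$ is cyclic of the same $\fp$-dimension as $M$. The only cosmetic difference is that the paper exhibits the generator of $M^{\ast}$ explicitly as the functional $f$ vanishing on $1,x,\dots,x^{l-2}$ and sending $x^{l-1}$ to $1$, whereas you deduce the existence of such a generator abstractly from the invariance of the nilpotency index under transposition.
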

\begin{proof} If $M\simeq M_1\o M_2$, it is easy to see that, for $i=1,2$, $Hom_{\fp}(M_i,\fp)$ can be considered as an \fpg-submodule of
$Hom_{\fp}(M,\fp)$. Thus $Hom_{\fp}(M,\fp) \simeq
Hom_{\fp}(M_1,\fp)\o Hom_{\fp}(M_2,\fp)$ as \fpg-modules.
 Therefore it is enough to prove the statement in the case that $M$ is
 cyclic. Let $\dim_\fp M=l$.
 Regarding the automorphism of \fpg\, induced by the map $\s\mapsto\s^{-1}$, it is clear that the ideal generated by $(\s^{-1}-1)^l$ is isomorphic
 with the ideal generated with $(\s-1)^l$. Thus, one can consider $M$ as $\fpgg/\langle y^l\rangle$, where $y=\s^{-1}-1$. We define $f\in
 Hom_{\fp}(M,\fp)$ by extending rules
 $f(1)=f(x)=\cdots=f(x^{l-2})=0$ and $f(x^{l-1})=1$ linearly to $M$. It follows from the \fpg-module structure of
 $Hom_{\fp}(M,\fp)$ that $((\s-1)f)(m)=f((\s^{-1}-1)m)$ for $m\in
 M$. By repeating this we get $((\s-1)^{l-1}f)=f(y^{l-1})=1.$ Hence,
 we obtain $ \dim_\fp(\langle f\rangle)\geq l$.
Since $Hom_{\fp}(M_1,\fp)$ has dimension $l$ over $\fp$, we conclude
that $Hom_{\fp}(M_1,\fp)=\langle f\rangle.$ Thus, both $M$ and
$Hom_{\fp}(M_1,\fp)$ are cyclic \fpg-modules of dimension $l$, and
so they are isomorphic.
\end{proof}
It is clear that Theorem \ref{thm:relativekt} follows from Remark
\ref{r:psii} and the above proposition.\\

\noindent{\bf Acknowledgment.} I would like to thank Prof. J\'{a}n
\minac\,, Prof. Ajneet Dhillon, and Prof. John Swallow for many
useful discussion and comments, in particular, Prof. J\'{a}n
\minac\, who guided me to the second proof.
\bibliographystyle {amsalpha}
\begin {thebibliography} {VDN92}

\bibitem{b} {\bf Benson, D. J.} Representation and cohomology. Cambridge University Press; 1st edition
(1998).

\bibitem{mss} {\bf Min\'{a}\v{c}, J., Schultz, A., Swallow, J.} Automatic realizations of Galois groups with cyclic quotients of order $p^n$.
(Preprint arXiv:math.NT/0603594)


\bibitem{s2} {\bf Shirbisheh, V.} Module theory of group algebras of cyclic \p-groups over fields of characteristic \p. (Preprint)

\bibitem{s3} {\bf Shirbisheh, V.} On certain Galois embedding problems with abelian kernels of exponent \p.
(Preprint)

\bibitem{w} {\bf Waterhouse, W. C.} The normal closure of certain
Kummer extensions. Canadian Mathematical Bulletin {\bf 37} (1994),
133-139.
\end {thebibliography}
\end{document}